\newtheorem{theorem}{Theorem}
\newtheorem{corollary}[theorem]{Corollary}
\newtheorem{lemma}[theorem]{Lemma}
\newtheorem{example}{Example}
\DeclareMathOperator{\kcon}{kcon}
\DeclareMathOperator{\gkcon}{kcon}
\DeclareMathOperator{\C}{C}
\DeclareMathOperator{\GC}{GC}
\begin{document}
\title{Enumerating k-connected blocks and gk-connected blocks in words}

\author{Walaa Asakly and Noor Kezil\\
Department of Mathematics, Braude college, Karmiel, Israel\\
Department of Mathematics, University of Haifa , Haifa, Israel\\
{\tt walaaa@braude.ac.il}\\
{\tt nkizil02@campus.haifa.ac.il}}
\date{\small }
\maketitle

\begin{abstract}
 We define two new statistics on words: the k-connector and the gk-connector. For a word $\pi = \pi_1\pi_2\cdots\pi_n$ of length $n$ over the alphabet $[k]$, a k-connector is defined as an ordered pair $(\pi_j, \pi_{j+1})$ where $1 \leq j \leq n-1$ and $\pi_j + \pi_{j+1} = k$. Conversely, a gk-connector is defined as an ordered pair $(\pi_j, \pi_{j+1})$ where $1 \leq j \leq n-1$ and $\pi_j + \pi_{j+1} > k$. We investigate the enumeration of partitions based on these statistics, providing generating functions and explicit formulas. 

\medskip

\noindent{\bf Keywords}:  k-connector, gk-connector, Cramer's method and generating function.
\end{abstract}

\section{Introduction}
 Let $[k]=\{1,2,\ldots,k\}$ be an alphabet over $k$
letters. A {\em word} $\pi$ of length $n$ over alphabet $[k]$
is an element of $[k]^n$ and is also called {\em word of length
$n$ on $k$ letters} or {\em $k$-ary word of length $n$}. The
number of the words of length $n$ over alphabet $[k]$ is $k^n$.
Statistics in words attract researchers, particularly, the study of 
avoidance patterns in words, see \cite{HM}. For example, Burstein and Mansour \cite{AT} enumerated 
the number of words which contain $111$, $222$, and $333$ as substrings a total
 of $r$ times. Kitaev, Mansour
and Remmel \cite{STJ} found the number of subword patterns $12$,
, $11$ and $21$ in words that have a prescribe first
element, in words. In addition, Mansour \cite{TM}
enumerated the number of subword patterns $121$, $132$ or $231$ and subword patterns $212$, $213$ or $312$.
in words of length $n$ over alphabet $k$. 
Knopfmacher, Munagi and Wagner \cite{AAS} found the mean and variance of the
$k$-ary words of length $n$ according to the number of {\em $p$-successions}, ($p$-succession in
 a $k$-ary word
$\omega_1\omega_2\cdots\omega_n$ of length $n$ is two consecutive letters
of the form $(\omega_i, \omega_i+ p)$, where $i = 1, 2, \cdots, n-1$). 
Mansour and Munagi \cite{TA} found the generating function for $k$-ary words of length $n$ according to the
statistic $con$ ($con(\pi)$ is defined to be the number of elements $\pi_i$ such that $\pi_i=\pi_{i-1}+1$ with $i=2,3\ldots ,n$).
In this paper,  We define two new statistics on words: the k-connector and the gk-connector. For a word $\pi = \pi_1\pi_2\cdots\pi_n$ of length $n$ over the alphabet $[k]$, a k-connector at $i$ is defined as an ordered pair $(\pi_i, \pi_{i+1})$ where $1 \leq i \leq n-1$ and $\pi_i + \pi_{i+1} = k$. A gk-connector at $i$ is defined as an ordered pair $(\pi_i, \pi_{i+1})$ where $1 \leq i \leq n-1$ and $\pi_i + \pi_{i+1} > k$.  For example, the word $\pi=143114$ of length $6$ over the alphabet $[4]$ has one k-connector (at $i=3$) and three gk-connectors ( at $i=1$, $i=2$ and $i=5$).
Let $\kcon(\pi)$ denote the number of k-connectors, and let $\gkcon(\pi)$ denote the number of gk-connectors.

\section{k-connectors}
\subsubsection{The generating function for the number of words of length $n$ over the alphabet $[k]$ according to the number of k-connectors}
Let $\kcon(\pi)$ denote the number of k-connectors.
Consider  the generating function for the number of words of $n$
according to the statistics  $\kcon$ to be $\C_k(x,q)$. That is,
$$\C_k(x,q)=\sum_{n\geq 0}\sum_{\pi \in [k]^n}x^n q^{\kcon(\pi)}.$$

\begin{theorem}\label{Th1}
 The generating function for the number of words of length $n$ over the alphabet $[k]$,
according to the number of k-connectors, is given by
\begin{align*}
\C_k(x,q)=\frac{1}{1-x-\left( \frac{x+x^2(q-1)}{1-x^2(q-1)^2}\right)(k-1)}.
\end{align*}
\end{theorem}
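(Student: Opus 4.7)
The plan is a first-letter decomposition. For each $a\in[k]$, let
\[
A_a(x,q):=\sum_{n\ge 1}\sum_{\substack{\pi\in[k]^n\\ \pi_1=a}} x^{n}q^{\kcon(\pi)},
\]
so that $\C_k(x,q)=1+S$ with $S:=\sum_{a=1}^{k}A_a(x,q)$. Peeling off the initial letter of $\pi=a\pi'$, and using $\kcon(a\pi')=\kcon(\pi')+[a+b=k]$ whenever $\pi'$ begins with $b$, together with the expansion $q^{[a+b=k]}=1+(q-1)[a+b=k]$, yields the linear system
\begin{align*}
A_a &= x+xS+x(q-1)A_{k-a},\qquad a\in\{1,\ldots,k-1\},\\
A_k &= x+xS,
\end{align*}
the last term being absent when $a=k$ because $k-a=0\notin[k]$.

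To solve this $k\times k$ linear system I would invoke the involution $c\leftrightarrow k-c$ on $\{1,\ldots,k-1\}$ (with $k$ fixed), which preserves both length and $\kcon$ on words; this shows $A_a=A_{k-a}$ for each $a\in\{1,\ldots,k-1\}$. Substituting this equality collapses each pair of equations into the single relation $(1-x(q-1))A_a=x(1+S)$, giving
\[
A_a=\frac{x(1+S)}{1-x(q-1)}\qquad\text{for }1\le a\le k-1,
\]
with the borderline case $a=k/2$ (when $k$ is even, so $a=k-a$) satisfying the same equation directly. Alternatively, as the keywords hint, one can apply Cramer's rule to the full $k\times k$ system. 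Summing all $A_a$ then gives
\begin{align*}
S=x(1+S)+\frac{(k-1)\,x(1+S)}{1-x(q-1)},
\end{align*}
which I would solve for $\C_k(x,q)=1+S$.

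The last step is cosmetic: the identity
\[
\frac{x}{1-x(q-1)}=\frac{x\bigl(1+x(q-1)\bigr)}{\bigl(1-x(q-1)\bigr)\bigl(1+x(q-1)\bigr)}=\frac{x+x^2(q-1)}{1-x^2(q-1)^2}
\]
rewrites the denominator of $\C_k(x,q)$ in exactly the form stated in Theorem~\ref{Th1}. The main obstacle I anticipate is really just recognizing the involution that forces $A_a=A_{k-a}$; after that the argument is routine linear algebra followed by the trivial algebraic rewriting above.
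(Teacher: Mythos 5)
Your proposal is correct and follows essentially the same route as the paper: peel one letter off an end of the word (you use the first letter, the paper the last --- equivalent since $\kcon$ is reversal-invariant), derive the linear system $A_a = x(1+S)+x(q-1)A_{k-a}$ for $a\le k-1$ with $A_k=x(1+S)$, and solve for $S$. The only real difference is how the pair $\{A_a,A_{k-a}\}$ is decoupled: the paper substitutes the recursion into itself (using $k-(k-a)=a$) to reach the form with denominator $1-x^2(q-1)^2$, while you invoke the complementation involution $c\mapsto k-c$ (fixing $k$), which indeed preserves $\kcon$ because no k-connector can involve the letter $k$, giving $A_a=A_{k-a}$ at once and the equivalent simpler expression $\frac{x}{1-x(q-1)}$.
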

\begin{proof}
Let $C_k(x,q|i_1i_2\ldots i_s)$ denote the generating function for the number of $k$-ary words $\pi=\pi_1\pi_2\ldots\pi_n$ of length $n$ according to the statistic $\kcon$, where $\pi_{n-s+1}\pi_{n-s+2}\ldots\pi_n=i_1i_2\ldots i_s$. From the definitions we have
\begin{equation}\label{eq1}
C_k(x,q)=1+\sum_{i=1}^{k}C_k(x,q|i)=1+\sum_{i=1}^{k-1}C_k(x,q|i)+C_k(x,q|k).
\end{equation}
For $1\leq i\leq k-1$
\begin{align*}
C_k(x,q|i)&=x+\sum_{j=1, j\neq k-i}^{k}C_k(x,q|ji)+qxC_k(x,q|k-i)\\
&=x+x\sum_{j=1}^{k-1}C_k(x,q|j)-xC_k(x,q|k-i)+qxC_k(x,q|k-i)+xC_k(x,q|k).
\end{align*}
By (\ref{eq1}) we get
\begin{align*}
C_k(x,q|i)&=x+x\left(C_k(x,q)-1-C_k(x,q|k)\right)+x(q-1)C_k(x,q|k-i)+xC_k(x,q|k)\\
&=xC_k(x,q)+x(q-1)C_k(x,q|k-i).
\end{align*}
Therfore, by induction on $i$, we have
\begin{align*}
C_k(x,q|i)=xC_k(x,q)+x(q-1)\left(xC_k(x,q)+x(q-1)C_k(x,q|i)\right).
\end{align*}
Then
\begin{align*}
C_k(x,q|i)=C_k(x,q)\left(\frac{x+x^2(q-1)}{1-x^2(q-1)^2}\right).
\end{align*}
By summing over $1\leq i\leq k-1$, and by using (\ref{eq1}) we get
\begin{align*}
C_k(x,q)-1-C_k(x,q|k)=C_k(x,q)(k-1)\left(\frac{x+x^2(q-1)}{1-x^2(q-1)^2}\right).
\end{align*}
By substituting $C_k(x,q|k)=xC_k(x,q)$, we obtain the required result.
\end{proof}

Let $\kcon(n)$ denote the number of k-connected blocks in $C_n$.
\begin{lemma}\label{lem1}
The generation functions for the sequence $\kcon(n)$ is given by
 \begin{equation}\label{eq4}
 \frac{\partial }{\partial q}C_k(x,q)\mid_{q=1}=\frac{(k-1)x^2}{\left(1-x-(k-1)x\right)^2}
 \end{equation}
\end{lemma}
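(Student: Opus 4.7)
The plan is to differentiate the closed-form expression for $C_k(x,q)$ given by Theorem~\ref{Th1} with respect to $q$ and then specialize at $q=1$. The first move is to simplify the inner fraction: using the factorizations $1-x^2(q-1)^2=(1-x(q-1))(1+x(q-1))$ and $x+x^2(q-1)=x(1+x(q-1))$, the factor $(1+x(q-1))$ cancels, leaving $\tfrac{x}{1-x(q-1)}$. Substituting this back into the formula of Theorem~\ref{Th1} and clearing the resulting compound fraction yields the cleaner rational expression
\[
C_k(x,q)=\frac{1-x(q-1)}{(1-x)(1-x(q-1))-(k-1)x}.
\]

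Next, write this as $A(q)/B(q)$ with $A(q)=1-x(q-1)$ and $B(q)=(1-x)(1-x(q-1))-(k-1)x$, and apply the quotient rule $C_k'=\bigl(A'B-AB'\bigr)/B^2$. Since $A(1)=1$, $A'(1)=-x$, $B(1)=1-x-(k-1)x=1-kx$, and $B'(1)=-x(1-x)$ (note that $B'$ is in fact constant in $q$), a short computation gives
\[
\frac{\partial}{\partial q}C_k(x,q)\Big|_{q=1}=\frac{(-x)(1-kx)-(1)\bigl(-x(1-x)\bigr)}{(1-kx)^2}=\frac{kx^2-x^2}{(1-kx)^2}=\frac{(k-1)x^2}{\bigl(1-x-(k-1)x\bigr)^2},
\]
which is exactly the right-hand side of (\ref{eq4}).

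The generating-function interpretation is the standard one: differentiating $C_k(x,q)=\sum_{n\geq 0}x^n\sum_{\pi\in[k]^n}q^{\kcon(\pi)}$ in $q$ and then setting $q=1$ produces $\sum_{n\geq 0}x^n\sum_{\pi\in[k]^n}\kcon(\pi)$, i.e.\ the ordinary generating function for the aggregate statistic $\kcon(n)$. I do not expect any real obstacle here; the only mildly delicate step is spotting the factorization that collapses $\frac{x+x^2(q-1)}{1-x^2(q-1)^2}$ to $\frac{x}{1-x(q-1)}$, after which evaluating the derivative at $q=1$ is a routine quotient-rule computation.
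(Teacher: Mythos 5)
Your computation is correct: the cancellation of $(1+x(q-1))$ reduces Theorem~\ref{Th1} to $\frac{1-x(q-1)}{(1-x)(1-x(q-1))-(k-1)x}$, and the quotient-rule evaluation at $q=1$ gives exactly $\frac{(k-1)x^2}{(1-kx)^2}$ with $1-kx=1-x-(k-1)x$. The paper offers no written proof of this lemma, but your route (differentiate the closed form of Theorem~\ref{Th1} at $q=1$) is precisely the intended one, and you supply the details the paper omits.
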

Using a computer algebra system such as Maple, we determined the coefficients of $x^n$ in (\ref{eq4}) and obtained the following result.
\begin{corollary}\label{cor1}
The number of the k-connectors blocks over all words of length $n$ over the alphabet $[k]$ is given by
\begin{equation*}
(k-1)(n-1)k^{n-2}.
\end{equation*}
\end{corollary}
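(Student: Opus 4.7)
The plan is to read off the coefficient of $x^n$ from the generating function given in Lemma~\ref{lem1}, since the quantity we want is precisely $[x^n]\,\frac{\partial}{\partial q}C_k(x,q)\big|_{q=1}$. First I would simplify the denominator $1-x-(k-1)x=1-kx$, so that
\begin{align*}
\frac{\partial}{\partial q}C_k(x,q)\Big|_{q=1}=\frac{(k-1)x^2}{(1-kx)^2}.
\end{align*}
Then I would invoke the standard series expansion
\begin{align*}
\frac{1}{(1-kx)^2}=\sum_{m\geq 0}(m+1)k^m x^m,
\end{align*}
multiply through by $(k-1)x^2$, and extract the coefficient of $x^n$ by setting $m=n-2$, which yields $(k-1)(n-1)k^{n-2}$ for $n\geq 2$ (and $0$ for $n\leq 1$, as expected).

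As a sanity check I would offer the following brief combinatorial argument. For each fixed position $i\in\{1,2,\ldots,n-1\}$, a word contributes a k-connector at $i$ exactly when $(\pi_i,\pi_{i+1})$ is one of the ordered pairs $(a,k-a)$ with $a\in\{1,2,\ldots,k-1\}$, of which there are $k-1$; the remaining $n-2$ entries are arbitrary, contributing a factor $k^{n-2}$. Summing over the $n-1$ possible positions gives exactly $(k-1)(n-1)k^{n-2}$, matching the coefficient extracted above.

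There is no serious obstacle here: the only thing to be careful about is the denominator simplification $1-x-(k-1)x=1-kx$, which collapses the expression into a form where the $\frac{1}{(1-kx)^2}$ expansion is immediate. So the write-up is simply a short calculation, optionally accompanied by the combinatorial verification above.
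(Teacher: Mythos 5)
Your main argument is the same as the paper's: both read off the coefficient of $x^n$ from the derivative computed in Lemma~\ref{lem1}. The difference is that the paper simply states that the coefficients were determined ``using a computer algebra system such as Maple,'' whereas you carry out the extraction explicitly by simplifying the denominator to $(1-kx)^2$ and using the expansion $\frac{1}{(1-kx)^2}=\sum_{m\geq 0}(m+1)k^mx^m$; your version is therefore more self-contained and verifiable. Your appended combinatorial argument (counting, for each of the $n-1$ positions, the $k-1$ ordered pairs $(a,k-a)$ with $a\in\{1,\dots,k-1\}$ and $k^{n-2}$ choices for the remaining letters) is a genuinely different and more elementary route that the paper does not take; it proves the corollary directly by linearity without any generating functions, and also serves as an independent check on Theorem~\ref{Th1} and Lemma~\ref{lem1}. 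Both calculations are correct.
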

For instance, when $n=3$ and $k=2$, we have three words that have $2$-connectors: $111$ has two $2$-connectors; $112$ has one $2$-connectors; and $211$ has one $2$-connectors. The number of $2$-connectors is $4$.

\section{gk-connectors}
\subsubsection{The generating function for the number of words of length $n$ over the alphabet $[k]$ according to the number of gk-connectors}
Let $\gkcon(\pi)$ denote the number of gk-connectors.
Consider  the generating function for the number of words of $n$
according to the statistics  $\gkcon$ to be $\GC_k(x,q)$. That is,
$$\GC_k(x,q)=\sum_{n\geq 0}\sum_{\pi \in [k]^n}x^n q^{\gkcon(\pi)}.$$
\begin{theorem}\label{Th2}
 The generating function for the number of words of length $n$ over the alphabet $[k]$,
according to the number of gk-connectors, satisfies
\begin{align*}
GC_k(x,q)=\frac{1+x(q-1)\sum_{i=1}^{k}\sum_{j=k-i+1}^{k}GC(x,q|j)}{1-kx}.
\end{align*}
\end{theorem}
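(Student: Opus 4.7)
The plan is to mirror the refinement used in Theorem~\ref{Th1}: introduce the auxiliary generating functions $GC_k(x,q\mid i_1 i_2 \cdots i_s)$ that count $k$-ary words $\pi_1\pi_2\cdots\pi_n$ (weighted by $x^n q^{\gkcon(\pi)}$) with prescribed suffix $\pi_{n-s+1}\cdots\pi_n = i_1 \cdots i_s$. The starting decomposition is the obvious
\begin{equation*}
GC_k(x,q) = 1 + \sum_{i=1}^{k} GC_k(x,q\mid i),
\end{equation*}
where the $1$ accounts for the empty word. The plan is then to derive a linear equation for each $GC_k(x,q\mid i)$ by conditioning on the penultimate letter, sum these over $i$, and solve for $GC_k(x,q)$.

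First I would fix $i \in [k]$ and split the sum over the letter $j$ that precedes the final $i$ according to whether the pair $(j,i)$ is a gk-connector. Since $j+i>k$ is equivalent to $j \geq k-i+1$, I would write
\begin{equation*}
GC_k(x,q\mid i) = x + x \sum_{j=1}^{k-i} GC_k(x,q\mid j) + qx \sum_{j=k-i+1}^{k} GC_k(x,q\mid j),
\end{equation*}
and then combine the two sums by adding and subtracting $x\sum_{j=k-i+1}^{k} GC_k(x,q\mid j)$. Using the global identity above, this collapses to the compact form
\begin{equation*}
GC_k(x,q\mid i) = x\, GC_k(x,q) + x(q-1) \sum_{j=k-i+1}^{k} GC_k(x,q\mid j).
\end{equation*}

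To conclude, I would sum this identity over $i = 1, 2, \ldots, k$. The left-hand side becomes $GC_k(x,q) - 1$, the first term on the right becomes $kx\, GC_k(x,q)$, and the second term yields exactly the double sum appearing in the theorem statement. Rearranging $(1-kx)\, GC_k(x,q) = 1 + x(q-1) \sum_{i=1}^{k}\sum_{j=k-i+1}^{k} GC_k(x,q\mid j)$ gives the claimed formula.

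Unlike Theorem~\ref{Th1}, here the indices $k-i+1,\ldots,k$ depend on $i$ in a way that does not pair nicely under the involution $i \mapsto k-i$, so the $GC_k(x,q\mid j)$ terms are not eliminated and the statement is intrinsically implicit. The only real pitfall I anticipate is bookkeeping: being careful at the boundary $i=k$ (where the inner sum runs over all $j \in [k]$) and at $i$ small (where it is narrow), and verifying that the rewrite trick of adding and subtracting $x\sum_{j=k-i+1}^{k} GC_k(x,q\mid j)$ correctly produces the factor $(q-1)$ without double-counting the $j=k-i$ contribution that does not form a gk-connector.
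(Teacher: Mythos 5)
Your proposal is correct and follows essentially the same route as the paper's own proof: the same suffix-refined generating functions, the same split of the penultimate letter at $j\geq k-i+1$, the same collapse to $GC_k(x,q\mid i)=xGC_k(x,q)+x(q-1)\sum_{j=k-i+1}^{k}GC_k(x,q\mid j)$, and the same summation over $i$. No issues.
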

\begin{proof}
Let $GC_k(x,q|i_1i_2\ldots i_s)$ denote the generating function for the number of $k$-ary words $\pi=\pi_1\pi_2\ldots\pi_n$ of length $n$ according to the statistic $\gkcon$, where $\pi_{n-s+1}\pi_{n-s+2}\ldots\pi_n=i_1i_2\ldots i_s$. From the definitions we have
\begin{equation}\label{eq2}
GC_k(x,q)=1+\sum_{i=1}^{k}GC_k(x,q|i).
\end{equation}
For $1\leq i\leq k$
\begin{align*}
GC_k(x,q|i)&=x+\sum_{j=1, j\neq k}^{k}GC_k(x,q|ji)\\
&=x+\sum_{j=1}^{k-i}GC_k(x,q|ji)+\sum_{j=k-i+1}^{k}GC_k(x,q|ji)\\
&=x+x\sum_{j=1}^{k-i}GC_k(x,q|j)+qx\sum_{j=k-i+1}^{k}GC_k(x,q|j).
\end{align*}
By (\ref{eq2}) we get
\begin{align*}
GC_k(x,q|i)&=x+x\left(\sum_{j=1}^{k}GC_k(x,q|j)-\sum_{j=k-i+1}^{k}GC_k(x,q|j)\right)+qx\sum_{j=k-i+1}^{k}GC_k(x,q|j)\\
&=x+x\left(GC_k(x,q)-1-\sum_{j=k-i+1}^{k}GC_k(x,q|j)\right)+qx\sum_{j=k-i+1}^{k}GC_k(x,q|j).
\end{align*}
Which leads to
\begin{equation}\label{eq3}
GC_k(x,q|i)=xGC_k(x,q)+x(q-1)\sum_{j=k-i+1}^{k}GC_k(x,q|j).
\end{equation}
By summing over $1\leq i\leq k$, and by using (\ref{eq2}) we get
\begin{align*}
GC_k(x,q)=\frac{1+x(q-1)\sum_{i=1}^{k}\sum_{j=k-i+1}^{k}G(x,q|j)}{1-kx}.
\end{align*}
\end{proof}
\begin{theorem}\label{Th3}
 The generating function for the number of words of length $n$ over the alphabet $[k]$,
according to the number of gk-connectors, where $k$ is even number is given by,
\begin{align*}
GC_k(x,q)=\frac{\sum_{i=0}^{k}{(-1)^{\lfloor\frac{i+1}{2}\rfloor}}\binom{\lfloor\frac{k-i}{2}\rfloor+i}{i}b^i}
{\sum_{i=0}^{k}{(-1)^{\lfloor\frac{i+1}{2}\rfloor}}\binom{\lfloor\frac{k-i}{2}\rfloor+i}{i}b^i-x\sum_{j=0}^{k-1}\left( 1+{(-1)^{\lfloor\frac{j}{2}\rfloor}}\sum_{i=1}^{j}(-1)^{\lfloor\frac{j-i}{2}\rfloor}\binom{\lfloor\frac{j-i}{2}\rfloor+i}{i}b^i\right)}
\end{align*}
Where $b=x(q-1)$.
\end{theorem}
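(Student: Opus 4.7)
The plan is to solve the system \eqref{eq3} in closed form and then substitute into \eqref{eq2}. Writing $G_i := GC_k(x,q|i)$ and $b := x(q-1)$, equation \eqref{eq3} reads $G_i = xGC_k + b\sum_{j=k-i+1}^{k}G_j$. Subtracting consecutive instances yields the compact difference recurrence
\begin{equation*}
G_{i+1} - G_i = b\,G_{k-i}, \qquad 1 \le i \le k-1,
\end{equation*}
while the $i=1$ instance $G_1 = xGC_k + bG_k$ is retained as a boundary condition.

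Since $k$ is even, I set $m := k/2$ and introduce the paired variables $L_s := G_{m+1-s}$ and $R_s := G_{m+s}$ for $s=1,\dots,m$. Evaluating the difference recurrence at $i=m$ gives the seed relation $R_1 = (1+b)L_1$, while the remaining instances split into $L_{s+1} = L_s - bR_s$ and $R_{s+1} = R_s + bL_{s+1}$ for $s=1,\dots,m-1$. Read as a $2\times 2$ matrix recursion on $(L_s,R_s)$, the evolution matrix has determinant $1$ and trace $2-b^2$, so by Cayley--Hamilton both sequences satisfy the Chebyshev-like scalar recurrence $X_{s+2} = (2-b^2)X_{s+1} - X_s$. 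This pins down polynomial sequences $P_s(b), Q_s(b)$ with $L_s = P_s(b)\,G_m$ and $R_s = Q_s(b)\,G_m$, whose initial values are $P_1 = 1,\ P_2 = 1-b-b^2,\ Q_1 = 1+b,\ Q_2 = 1+2b-b^2-b^3$. Feeding the boundary $L_m = xGC_k + bR_m$ into these formulas gives $G_m = xGC_k/(P_m - bQ_m)$, and then $\sum_{i=1}^k G_i = G_m\sum_{s=1}^m(P_s+Q_s)$ together with \eqref{eq2} yields the rational closed form
\begin{equation*}
GC_k(x,q) = \frac{P_m(b) - bQ_m(b)}{P_m(b) - bQ_m(b) - x\sum_{s=1}^m\bigl(P_s(b) + Q_s(b)\bigr)}.
\end{equation*}

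The last step, and the one I expect to be the main obstacle, is matching this expression to the stated formula. Let $T_j(b) := 1 + (-1)^{\lfloor j/2\rfloor}\sum_{i=1}^j(-1)^{\lfloor(j-i)/2\rfloor}\binom{\lfloor(j-i)/2\rfloor + i}{i}b^i$ denote the universal polynomial appearing in the denominator of the statement. The key technical claim is the three-term identity $T_{j+2} = (2-b^2)T_j - T_{j-2}$ for $j\ge 2$, together with $T_0, T_1, T_2, T_3$ matching the initial values of $P_1, Q_1, P_2, Q_2$; verifying this is a careful binomial manipulation whose main difficulty is tracking the floor-function signs. Once established, induction on $s$ gives $P_s = T_{2s-2}$ and $Q_s = T_{2s-1}$, so that $\sum_{s=1}^m(P_s+Q_s) = \sum_{j=0}^{k-1}T_j$ recovers the denominator subtraction. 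Finally, writing $P_m - bQ_m = T_{k-2} - bT_{k-1}$ and pooling coefficients of $b^i$ (after an index shift $i \mapsto i-1$ in the $T_{k-1}$ term) collapses the numerator into the single sum $\sum_{i=0}^k(-1)^{\lfloor(i+1)/2\rfloor}\binom{\lfloor(k-i)/2\rfloor + i}{i}b^i$ displayed in the theorem.
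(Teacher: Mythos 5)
Your proposal is correct, and it reaches Theorem \ref{Th3} by a genuinely different route from the paper. The paper keeps the full $k\times k$ system arising from (\ref{eq3}), applies Cramer's rule, and obtains closed forms for $\det A$ and $\det A_\ell$ (Lemmas \ref{L1} and \ref{L2}) only by computing the cases $k=2,4,6$ and declaring that guessing plus induction finishes the job; the induction is never carried out. You instead difference (\ref{eq3}) to get $G_{i+1}-G_i=bG_{k-i}$, keep $G_1=xGC_k(x,q)+bG_k$ as the boundary equation (together these are equivalent to the original system), fold the unknowns about the middle into a $2\times 2$ recursion with determinant $1$ and trace $2-b^2$, and solve. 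I checked the seed $R_1=(1+b)L_1$, the recursions $L_{s+1}=L_s-bR_s$ and $R_{s+1}=R_s+bL_{s+1}$, the initial polynomials, and the resulting closed form $GC_k=\frac{P_m-bQ_m}{P_m-bQ_m-x\sum_{s=1}^m(P_s+Q_s)}$; all of this is sound and needs no determinants. What you defer --- the identity $T_{j+2}=(2-b^2)T_j-T_{j-2}$ with the stated initial values, hence $P_s=T_{2s-2}$, $Q_s=T_{2s-1}$, and the numerator identity $P_m-bQ_m=T_{k-2}-bT_{k-1}=\sum_{i=0}^{k}(-1)^{\lfloor(i+1)/2\rfloor}\binom{\lfloor(k-i)/2\rfloor+i}{i}b^i$ --- is true (it reproduces the paper's determinant values for $k=2,4,6$, and for even $k$ the stated numerator is exactly $T_k$), and it is a finite floor/binomial verification, but it must be written out for the proof to be complete; it plays exactly the role of the induction the paper omits, so once added your argument is at least as complete as the paper's. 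A cleaner finish is to prove the coupled identities $T_{2s}=T_{2s-2}-bT_{2s-1}$ and $T_{2s+1}=T_{2s-1}+bT_{2s}$, which mirror $P_{s+1}=P_s-bQ_s$, $Q_{s+1}=Q_s+bP_{s+1}$ and deliver the identification and the numerator in a single induction. An added benefit of your route is that it never needs the individual column determinants: the paper's Lemma \ref{L2} as stated does not agree with its own worked examples (for $k=6$ it would give $\det A_1=a$, while the example computes $a(b^4+b^3-3b^2-2b+1)$), although the aggregate quantity actually used in the theorem, and hence the theorem itself, is correct --- as your independent derivation confirms.
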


\begin{proof}

 By summing over $1\leq i\leq k$ for $k$ even, and by using (\ref{eq3}) we get,
\begin{align*}
\hspace{-0.1cm}
\left\{\begin{array}{ll}
GC_k(x,q|1)-bGC_k(x,q|k)&=a\\
GC_k(x,q|2)-bGC_k(x,q|k-1)-bGC_k(x,q|k)&=a\\
&\vdots\\
-bGC_k(x,q|\lceil{\frac{k+1}{2}}\rceil-1)+(1-b)GC_k(x,q|\lceil{\frac{k+1}{2}}\rceil)-bGC_k(x,q|\lceil{\frac{k+1}{2}}\rceil+1)\dots-bGC_k(x,q|k)&=a\\
&\vdots\\
-bGC_k(x,q|\lceil{\frac{k+1}{2}}\rceil-2)-bGC_k(x,q|\lceil{\frac{k+1}{2}}\rceil-1)-bGC_k(x,q|\lceil{\frac{k+1}{2}}\rceil)
\\+(1-b)GC_k(x,q|\lceil{\frac{k+1}{2}}\rceil+1)-bGC_k(x,q|\lceil{\frac{k+1}{2}}\rceil+2)\dots-bGC_k(x,q|k)&=a\\
&\vdots\\
-bGC_k(x,q|1)-bGC_k(x,q|2)\dots-bGC_k(x,q|k-1)
+(1-b)GC_k(x,q|k)&=a\\
\end{array}\right. ,
\end{align*}
where $a=xGC_k(x,q)$ and $b=x(q-1)$.
 Note that we can write the above system of equations as a linear system of $k$ equations in $k$ variables. Therefore, it
can be written in a matrix form as follows
$$A \left(\begin{array}{l}
 GC_k(x,q|1)\\
GC_k(x,q|2)\\
 \vdots \\
GC_k(x,q|k) \end{array}\right)=
 \left(\begin{array}{c}
 a\\
 a\\
 \vdots \\
 a \end{array}\right),$$
where
\begin{equation}
A= \left( \begin{array}{llllllll}
 1& 0& 0& \cdots & \cdots& \cdots &0 & -b \\[1ex]
0& 1& 0& \cdots & \cdots &0 &-b &-b  \\[1ex]
 &  \ddots &  \ddots & \nonumber\\[1ex]
0& \cdots & 0 &1 & -b & \cdots &\cdots &-b\\ [1ex]
0& \cdots & -b &1-b & -b & \cdots &\cdots &-b \hspace{1em} \text{(Row${\lceil\frac{k+1}{2}}\rceil$)}\\[1ex] 
 0& \cdots & -b &-b & 1-b & -b &\cdots &-b \\[1ex] 
 &  \ddots &  \ddots & \\[1ex]
-b & -b & \cdots& \cdots & \cdots& \cdots& -b  & 1-b 
\end{array} \right)
\end{equation}

\begin{example}
For $k=2$
\[
A= \left( \begin{array}{ll}
 1 & -b \\
 -b & 1 - b 
\end{array} \right)
.\]
For $k=4$
\[
A= \left( \begin{array}{llll}
 1 & 0& 0&-b\\
 0 & 1& -b&-b\\
0 & -b& 1-b&-b\\
 -b & - b&-b&1-b 
\end{array} \right)
.\]
For $k=6$
\[
A= \left( \begin{array}{llllll}
 1 & 0& 0&0&0&-b\\
 0 & 1&0&0& -b&-b\\
0 & 0&1&-b& -b&-b\\
0 & 0&-b&1-b& -b&-b\\
0 & -b&-b&-b& 1-b&-b\\
-b & -b&-b&-b& -b&1-b
\end{array} \right)
.\]
\end{example}
By using Cramer's method we obtain that the generating function
$G(x,q|\ell)$ is given by
$$\frac{\det A_{\ell}}{\det A}$$

Where, $A_\ell$ is obtained by replacing column number $\ell$ in matrix $A$ with $\left(\begin{array}{c}
 a\\
 a\\
 \vdots \\
 a \end{array}\right)$.  
 \begin{lemma}\label{L1}
For all $k$ even  
$$\det A=\sum_{i=0}^{k}{(-1)^{\lfloor\frac{i+1}{2}\rfloor}}\binom{\lfloor\frac{k-i}{2}\rfloor+i}{i}b^i$$
\end{lemma}
\begin{proof}
For $k=2$
\[
\det A= \left| \begin{array}{ll}
 1 & -b \\
 -b & 1 - b 
\end{array} \right|=-b^2-b+1
.\]
For $k=4$
\[
\det A= \left|\begin{array}{llll}
 1 & 0& 0&-b\\
 0 & 1& -b&-b\\
0 & -b& 1-b&-b\\
 -b & - b&-b&1-b 
\end{array} \right|=b^4+b^3-3b^2-2b+1
.\]
For $k=6$
\[
\det A= \left| \begin{array}{llllll}
 1 & 0& 0&0&0&-b\\
 0 & 1&0&0& -b&-b\\
0 & 0&1&-b& -b&-b\\
0 & 0&-b&1-b& -b&-b\\
0 & -b&-b&-b& 1-b&-b\\
-b & -b&-b&-b& -b&1-b
\end{array} \right|=-b^6 - b^5 + 5b^4 + 4b^3 - 6b^2 - 3b + 1
.\]
  By guessing the formula and using induction over $k$ even we get the result.
\end{proof}

\begin{lemma}\label{L2}
For  $1\leq\ell\leq k$, where $k$ is even  
$$\det A_{\ell}=a\sum_{j=0}^{\ell-1}\left( 1+{(-1)^{\lfloor\frac{j}{2}\rfloor}}\sum_{i=1}^{j}(-1)^{\lfloor\frac{j-i}{2}\rfloor}\binom{\lfloor\frac{j-i}{2}\rfloor+i}{i}b^i\right)$$
\end{lemma}
\begin{proof}
For $k=2$
\[
\det A_1= \left| \begin{array}{ll}
 a & -b \\
 a & 1 - b 
\end{array} \right|=a
.\]

\[
\det A_2= \left| \begin{array}{ll}
 1 & a \\
 -b & a 
\end{array} \right|=a(-b+1)
.\]
For $k=4$
\[
\det A_1= \left|\begin{array}{llll}
a& 0&0&-b\\
a&1&-b&-b\\
a&-b&1-b&-b\\
a&-b&-b&1-b 
\end{array} \right|=a(-b^2-b+1)
.\]

\[
\det A_2= \left|\begin{array}{llll}
1&a&0&-b\\
0&a&-b&-b\\
0&a&1-b&-b\\
-b&a&-b&1-b 
\end{array} \right|=a
.\]

\[
\det A_3= \left|\begin{array}{llll}
1&0&a&-b\\
0&1&a&-b\\
0&-b&a&-b\\
-b&-b&a&1-b 
\end{array} \right|=a(b+1)
.\]

\[
\det A_4= \left|\begin{array}{llll}
 1 & 0& 0&a\\
 0 & 1& -b&a\\
0 & -b& 1-b&a\\
 -b & - b&-b&a
\end{array} \right|=a(-b^3-b^2+2b+1)
.\]
For $k=6$
\[
\det A_1= \left| \begin{array}{llllll}
 a & 0& 0&0&0&-b\\
 a & 1&0&0& -b&-b\\
a& 0&1&-b& -b&-b\\
a & 0&-b&1-b& -b&-b\\
a & -b&-b&-b& 1-b&-b\\
a & -b&-b&-b& -b&1-b
\end{array} \right|=a(b^4+b^3-3b^2-2b+1)
.\]
\[
\det A_2= \left| \begin{array}{llllll}
 1 & a& 0&0&0&-b\\
 0 & a&0&0& -b&-b\\
0 & a&1&-b& -b&-b\\
0 & a&-b&1-b& -b&-b\\
0 & a&-b&-b& 1-b&-b\\
-b & a&-b&-b& -b&1-b
\end{array} \right|=a(-b^2-b+1)
.\]
\[
\det A_3= \left| \begin{array}{llllll}
 1 & 0& a&0&0&-b\\
 0 & 1&a&0& -b&-b\\
0 & 0&a&-b& -b&-b\\
0 & 0&a&1-b& -b&-b\\
0 & -b&a&-b& 1-b&-b\\
-b & -b&a&-b& -b&1-b
\end{array} \right|=a
.\]
\[
\det A_4= \left| \begin{array}{llllll}
 1 & 0& 0&a&0&-b\\
 0 & 1&0&a& -b&-b\\
0 & 0&1&a& -b&-b\\
0 & 0&-b&a& -b&-b\\
0 & -b&-b&a& 1-b&-b\\
-b & -b&-b&a& -b&1-b
\end{array} \right|=a(b+1)
.\]
\[
\det A_5= \left| \begin{array}{llllll}
 1 & 0& 0&0&a&-b\\
 0 & 1&0&0&a&-b\\
0 & 0&1&-b&a&-b\\
0 & 0&-b&1-b&a&-b\\
0 & -b&-b&-b&a&-b\\
-b & -b&-b&-b&a&1-b
\end{array} \right|=a(-b^3-b^2+2b+1)
.\]
\[
\det A_6= \left| \begin{array}{llllll}
 1 & 0& 0&0&0&a\\
 0 & 1&0&0& -b&a\\
0 & 0&1&-b& -b&a\\
0 & 0&-b&1-b& -b&a\\
0 & -b&-b&-b& 1-b&a\\
-b & -b&-b&-b& -b&a
\end{array} \right|=a(b^5+b^4-4b^3-3b^2+3b+1)
.\]
By guessing the formula and using induction over $k$ even we get the result.
\end{proof}
Using (\ref{eq2}), we derive that the generating function $GC_k(x,q)$ is given by
\begin{align*}
GC_k(x,q)=1+\frac{1}{\det A}\sum_{\ell=1}^{k}{\det A_{\ell}},
\end{align*}
for $k$ even.

By using Lemma  \ref{L2} in the above equation we obtain,
\begin{align*}
GC_k(x,q)=\frac{\det A}{\det A-x\sum_{j=0}^{k-1}\left( 1+{(-1)^{\lfloor\frac{j}{2}\rfloor}}\sum_{i=1}^{j}(-1)^{\lfloor\frac{j-i}{2}\rfloor}\binom{\lfloor\frac{j-i}{2}\rfloor+i}{i}b^i\right)},
\end{align*}
by applying Lemma \ref{L1} we get the required result.
\end{proof}
\begin{corollary}
The number of the gk-connectors blocks over all words of length $n$ over the alphabet $[k]$  where $k$ is even number, is given by
$$\frac{(k+1)}{2}(n-1)k^{n-1}.$$
\end{corollary}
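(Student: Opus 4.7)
The plan is to mirror the approach used for Corollary~\ref{cor1}: differentiate $GC_k(x,q)$ from Theorem~\ref{Th3} with respect to $q$, evaluate at $q=1$, and extract the coefficient of $x^n$. Since $b = x(q-1)$ vanishes at $q=1$, most terms in the formula disappear; a single $q$-derivative picks up only the constant and linear contributions in $b$ (the latter carrying a factor $\partial b/\partial q = x$).

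First I would perform a sanity check: at $b=0$ only the $i=0$ term survives in the numerator $N$ of Theorem~\ref{Th3}, and each summand in the denominator reduces to $1$, so $GC_k(x,1) = 1/(1-kx)$, consistent with $|[k]^n| = k^n$. Then, writing $GC_k = N/D$ and applying the quotient rule at $q=1$ (where $N=1$ and $D=1-kx$), the only nontrivial task is extracting the $b$-coefficients of $N$ and of the auxiliary sum $\sum_{j=0}^{k-1} S_j$ inside $D$. For $N$ this coefficient is $-(\lfloor (k-1)/2\rfloor + 1) = -k/2$ when $k$ is even. For $S_j$, the identities $\lfloor j/2\rfloor + \lfloor (j-1)/2\rfloor = j-1$ and $\lfloor (j-1)/2\rfloor + 1 = \lceil j/2\rceil$ reduce its $b$-coefficient to $(-1)^{j-1}\lceil j/2\rceil$; summing over $j = 1,\dots,k-1$ telescopes to $k/2$ when $k$ is even.

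Substituting into the quotient rule gives
\[
\frac{\partial GC_k}{\partial q}\bigg|_{q=1} = \frac{k(k+1)x^2/2}{(1-kx)^2},
\]
and the identity $[x^{n-2}](1-kx)^{-2} = (n-1)k^{n-2}$ produces the claimed $\tfrac{k+1}{2}(n-1)k^{n-1}$. The main obstacle is the sign bookkeeping for $(-1)^{\lfloor j/2\rfloor + \lfloor (j-i)/2\rfloor}$ and the telescoping of the alternating sum; this is the one place the parity of $k$ is actually used. As an independent verification, a direct double-counting argument valid for every $k$ gives the same answer: at each of the $n-1$ positions, the ordered pairs $(a,b)\in[k]^2$ with $a+b>k$ number $\binom{k+1}{2} = k(k+1)/2$, and each such pair extends to $k^{n-2}$ words, yielding $(n-1)\cdot\tfrac{k(k+1)}{2}\cdot k^{n-2}$.
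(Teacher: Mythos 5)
Your proposal follows essentially the same route as the paper: differentiate the rational expression of Theorem \ref{Th3} at $q=1$ via the quotient rule, isolate the linear-in-$b$ coefficients of the numerator and of each summand $S_j$ using the identities $\lfloor j/2\rfloor+\lfloor (j-1)/2\rfloor=j-1$ and $\lfloor (j-1)/2\rfloor+1=\lceil j/2\rceil$, evaluate the alternating sum to $k/2$, and extract $[x^{n-2}](1-kx)^{-2}=(n-1)k^{n-2}$; all of these steps check out. Your closing double-counting remark --- $(n-1)$ positions times $\binom{k+1}{2}$ admissible pairs times $k^{n-2}$ completions --- is in fact a complete and more elementary proof valid for all $k$, which the paper does not give.
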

\begin{proof}
To find the total number of the gk-connectors blocks over all words of length $n$ over the alphabet $[k]$, we differentiate the generating 
function $GC_k(x,q)$ according to $q$, substitute $q=1$, and extract the coefficients of $x^n$.
Let
 $$f(x,q)=\sum_{i=0}^{k}{(-1)^{\lfloor\frac{i+1}{2}\rfloor}}\binom{\lfloor\frac{k-i}{2}\rfloor+i}{i}x^i(q-1)^i,$$ and $$g(x,q)=f(x,q)-x\sum_{j=0}^{k-1}\left( 1+{(-1)^{\lfloor\frac{j}{2}\rfloor}}\sum_{i=1}^{j}(-1)^{\lfloor\frac{j-i}{2}\rfloor}\binom{\lfloor\frac{j-i}{2}\rfloor+i}{i}x^i(q-1)^i\right).$$
Note that, according to Theorem \ref{Th3}, we have
 $$GC_k(x,q)=\frac{f(x,q)}{f(x,q)-g(x,q)}.$$ 
Differentiating  $GC_k(x,q)$ according to $q$ and substituting $q=1$, we obtain,
\begin{equation}\label{eq 1}
\frac{\partial GC_k(x,q)}{\partial q}|_{q=1}=\frac{\frac{{\partial f(x,q)}}{{\partial q}}|_{q=1}(f(x,1)-g(x,1))-\frac{\partial( {f(x,q)-g(x,q)})}{\partial q}|_{q=1}f(x,1)}{(f(x,1)-g(x,1))^2}.
\end{equation}
Where,
$f(x,1)-g(x,1)=1-kx$, $\frac{{\partial f(x,q)}}{{\partial q}}|_{q=1}=-\lfloor \frac{k+1}{2}\rfloor x$, and 
$$\frac{\partial {g(x,q)}}{\partial q}|_{q=1}=x^2\sum_{j=1}^{k-1}(-1)^{\lfloor \frac{j}{2} \rfloor + \lfloor \frac{j-1}{2} \rfloor}(\lfloor \frac{j-1}{2} \rfloor +1).$$
Using the facts
$ \lfloor \frac{j}{2} \rfloor + \lfloor \frac{j-1}{2} \rfloor=j-1$
 and $\lfloor \frac{j-1}{2} \rfloor +1=\lfloor \frac{j+1}{2} \rfloor$
leads to $$\frac{\partial {g(x,q)}}{\partial q}|_{q=1}=x^2\sum_{j=1}^{k-1}(-1)^{j-1}\lfloor \frac{j+1}{2} \rfloor. $$
Using the identity $\sum_{j=1}^{k-1}(-1)^{j-1}\lfloor \frac{j+1}{2} \rfloor=\lfloor  \frac{k}{2}\rfloor$ 
in (\ref{eq 1}), leads to
\begin{align*}
\frac{\partial GC_k(x,q)}{\partial q}|_{q=1}&= \left( \lfloor \frac{k+1}{2} \rfloor k+\lfloor \frac{k}{2} \rfloor \right)\frac{x^2}{(1-kx)^2}\\
&=\frac{k(k+1)}{2} \frac{x^2}{(1-kx)^2}\\
&=\frac{k(k+1)}{2}\sum_{i=0}^{\infty}\binom{1+n}{n}(kx)^n.
\end{align*}
By finding the coefficients of $x^{n-2}$ we get the desired result.
\end{proof}

\begin{theorem}\label{Th4}
 The generating function for the number of words of length $n$ over the alphabet $[k]$,
according to the number of gk-connectors, where $k$ is odd number is given by,
\begin{align*}
GC_k(x,q)=\frac{\sum_{i=0}^{k}{(-1)^{\lfloor\frac{i+1}{2}\rfloor}}\binom{\lfloor\frac{k-i}{2}\rfloor+i}{i}b^i}{\sum_{i=0}^{k}{(-1)^{\lfloor\frac{i+1}{2}\rfloor}}\binom{\lfloor\frac{k-i}{2}\rfloor+i}{i}b^i-x\sum_{j=0}^{k-1}\left(1+{(-1)^{\lfloor\frac{j+1}{2}\rfloor}}\sum_{i=1}^{j}(-1)^{\lfloor\frac{j-i+1}{2}\rfloor}
\binom{\lfloor\frac{j-i}{2}\rfloor+i}{i}b^i\right)}.
\end{align*}
Where $b=x(q-1)$.
\end{theorem}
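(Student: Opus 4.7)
The plan is to mirror the argument of Theorem~\ref{Th3}, adjusting for $k$ odd. Summing~(\ref{eq3}) over $1\leq i\leq k$ produces a linear system $A\mathbf{v}=a\mathbf{1}$, where $\mathbf{v}$ has entries $GC_k(x,q|i)$, $a=xGC_k(x,q)$ and $b=x(q-1)$. In row $i$ of $A$, the entries in columns $k-i+1,\ldots,k$ other than the diagonal equal $-b$, and the diagonal entry is $1$ if $i<k-i+1$ and $1-b$ otherwise. The only structural change from the even case is that when $k$ is odd the unique middle row $i=(k+1)/2$ satisfies $k-i+1=i$, so its diagonal sits inside the $-b$ block and equals $1-b$. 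For $k=3$ this gives
\[
A=\begin{pmatrix} 1 & 0 & -b \\ 0 & 1-b & -b \\ -b & -b & 1-b \end{pmatrix}.
\]

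Cramer's rule then yields $GC_k(x,q|\ell)=\det A_\ell/\det A$, and~(\ref{eq2}) gives $GC_k(x,q)=1+(\det A)^{-1}\sum_{\ell=1}^{k}\det A_\ell$. Next I would prove two analogues of Lemmas~\ref{L1} and~\ref{L2}. Comparing with the statement of the theorem, the formula for $\det A$ coincides with that of Lemma~\ref{L1}, while the formula for $\det A_\ell$ is obtained from Lemma~\ref{L2} by replacing $\lfloor j/2\rfloor$ with $\lfloor (j+1)/2\rfloor$ and $\lfloor (j-i)/2\rfloor$ with $\lfloor (j-i+1)/2\rfloor$, a parity shift caused by the central $1-b$ entry. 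I would verify both formulas on the base cases $k=1,3,5$ and extend by induction on $k$, using a Laplace expansion along the first row together with the last column so that each step strips rows/columns $1$ and $k$ and reduces to a matrix of the same parity class with dimension $k-2$.

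Assembling the result is then purely algebraic: combining $1+\sum_{\ell=1}^{k}\det A_\ell/\det A$ into a single fraction and applying both lemmas yields the stated formula after substituting $b=x(q-1)$. The main obstacle will be the induction for $\det A_\ell$, because replacing column $\ell$ by $(a,\ldots,a)^{T}$ breaks the symmetry exploited in the even case and cofactor expansion mixes sub-determinants of different shapes depending on where $\ell$ lies relative to the middle row. One option is a joint induction over all $\ell$ simultaneously; alternatively one can expand along the replaced column to obtain an alternating sum of principal sub-minors of $A$. Correctly isolating the coefficients $\binom{\lfloor (j-i)/2\rfloor+i}{i}$ together with the signs $(-1)^{\lfloor (j-i+1)/2\rfloor}$ for odd $k$ is the delicate combinatorial bookkeeping step that genuinely departs from the even case.
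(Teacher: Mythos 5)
Your proposal follows essentially the same route as the paper: the same linear system from (\ref{eq3}), the same odd-$k$ matrix $A$ with the $1-b$ diagonal entry appearing from the middle row onward, Cramer's rule, and odd-$k$ analogues of Lemmas \ref{L1} and \ref{L2} (which are exactly the paper's Lemmas \ref{L3} and \ref{L4}, the former giving the same polynomial as the even case and the latter carrying the parity shift you describe). The inductive verification of the two determinant formulas that you flag as the delicate step is also left at the level of ``guess from $k=1,3,5$ and induct'' in the paper itself, so your sketch matches the paper's argument in both structure and level of detail.
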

\begin{proof}
 By summing over $1\leq i\leq k$ for $k$ odd, and by using (\ref{eq3}) we get,
\begin{align*}
\hspace{-0.1cm}
\left\{\begin{array}{ll}
GC_k(x,q|1)-bGC_k(x,q|k)&=a\\
GC_k(x,q|2)-bGC_k(x,q|k-1)-bGC_k(x,q|k)&=a\\
&\vdots\\
GC_k(x,q|\lceil{\frac{k+1}{2}}\rceil-1)-bGC_k(x,q|\lceil{\frac{k+1}{2}}\rceil+1)\dots-bGC_k(x,q|k)&=a\\
(1-b)GC_k(x,q|\lceil{\frac{k+1}{2}}\rceil)-bGC_k(x,q|\lceil{\frac{k+1}{2}}\rceil+1)\dots-bGC_k(x,q|k)&=a\\
&\vdots\\
-bGC_k(x,q|\lceil{\frac{k+1}{2}}\rceil-1)-bGC_k(x,q|\lceil{\frac{k+1}{2}}\rceil)
+(1-b)GC_k(x,q|\lceil{\frac{k+1}{2}}\rceil+1)-bGC_k(x,q|\lceil{\frac{k+1}{2}}\rceil+2)\dots-bGC_k(x,q|k)&=a\\
&\vdots\\
-bGC_k(x,q|1)-bGC_k(x,q|2)\dots-bGC_k(x,q|k-1)
+(1-b)GC_k(x,q|k)&=a\\
,
\end{array}\right.
\end{align*}
 where  $a=xGC_k(x,q)$ and $b=x(q-1)$.
 Note that we can write the above system of equations as a linear system of $k$ equations in $k$ variables. Therefore, it
can be written in a matrix form as follows
$$A \left(\begin{array}{l}
 GC_k(x,q|1)\\
GC_k(x,q|2)\\
 \vdots \\
GC_k(x,q|k) \end{array}\right)=
 \left(\begin{array}{c}
 a\\
 a\\
 \vdots \\
 a \end{array}\right),$$
where
\begin{align}
A= \left( \begin{array}{lllllllll}
 1& 0& 0& \cdots & \cdots& \cdots &\cdots &0 & -b \\[1ex]
0& 1& 0& \cdots &\cdots& \cdots &0 &-b &-b \nonumber \\[1ex]
 &  \ddots &  \ddots & \nonumber\\[1ex]
0& \cdots & 0 &1 & 0 & -b &\cdots&\cdots &-b\nonumber\\[1ex]
0& \cdots & 0 &1-b & -b & \cdots &\cdots&\cdots &-b \hspace{1em} \text{(Row${\lceil\frac{k+1}{2}}\rceil$)}\\[1ex] 
 0& \cdots &0& -b &-b & 1-b & -b &\cdots &-b \nonumber\\[1ex] 
 &  \ddots &  \ddots & \nonumber\\[1ex]
 &  \ddots &  \ddots & \nonumber\\[1ex]
-b & -b & \cdots& \cdots & \cdots&\cdots& \cdots& -b  & 1-b 
\end{array} \right)
\end{align}
\begin{example}
For $k=1$
$$A= 1.$$
For $k=3$
\[
A= \left( \begin{array}{lll}
 1 & 0&-b\\
 0 & 1-b&-b\\
-b&-b&1-b
\end{array} \right)
.\]
For $k=5$
\[
A= \left( \begin{array}{lllll}
 1 & 0& 0&0&-b\\
 0 & 1&0& -b&-b\\
0 & 0&1-b&-b& -b\\
0 & -b&-b&1-b& -b\\
 -b&-b&-b& -b&1-b
\end{array} \right)
.\]
\end{example}
By using Cramer's method we obtain that the generating function $G(x,q|j)$ is given by
$$\frac{\det A_i}{\det A}$$
Where, $A_i$ is obtained by replacing column number $i$ in matrix $A$ with $\left(\begin{array}{c}
 a\\
 a\\
 \vdots \\
 a \end{array}\right)$.  
 \begin{lemma}\label{L3}
For all $k$ odd 
$$\det A=\sum_{i=0}^{k}{(-1)^{\lfloor\frac{i+1}{2}\rfloor}}\binom{\lfloor\frac{k-i}{2}\rfloor+i}{i}b^i$$
\end{lemma}
\begin{proof}
For $k=1$
\[
\det A= 1
.\]
For $k=3$
\[
\det A= \left|\begin{array}{lll}
 1 & 0&-b\\
 0 & 1-b&-b\\
-b&-b&1-b
\end{array} \right|=b^3-b^2-2b+1
.\]
For $k=5$
\[
\det A= \left| \begin{array}{lllll}
 1 & 0& 0&0&-b\\
 0 & 1&0& -b&-b\\
0 & 0&1-b&-b& -b\\
0 & -b&-b&1-b& -b\\
 -b&-b&-b& -b&1-b
\end{array} \right|=-b^5+b^4+4b^3-3b^2-3b+1
.\]
  By guessing the formula and using induction over $k$ even we get the result.
\end{proof}
\begin{lemma}\label{L4}
For $1\leq\ell\leq k$, where $k$ is odd  
$$\det A_\ell=a\sum_{j=0}^{\ell-1}\left(1+{(-1)^{\lfloor\frac{j+1}{2}\rfloor}}\sum_{i=1}^{j}(-1)^{\lfloor\frac{j-i+1}{2}\rfloor}
\binom{\lfloor\frac{j-i}{2}\rfloor+i}{i}b^i\right)$$
\end{lemma}
\begin{proof}
For $k=1$
\[
\det A_1= a
.\]
For $k=3$
\[
\det A_1= \left|\begin{array}{lll}
 a & 0&-b\\
 a & 1-b&-b\\
a&-b&1-b
\end{array} \right|=a(-b+1)
.\]
\[
\det A_2= \left|\begin{array}{lll}
 1 & a&-b\\
 0 & a&-b\\
-b&a&1-b
\end{array} \right|=a
.\]
\[
\det A_3= \left|\begin{array}{lll}
 1 & 0&a\\
 0 & 1-b&a\\
-b&-b&a
\end{array} \right|=a(-b^2+b+1)
.\]
For $k=5$
\[
\det A_1= \left| \begin{array}{lllll}
 a& 0& 0&0&-b\\
 a& 1&0& -b&-b\\
 a& 0&1-b&-b& -b\\
 a& -b&-b&1-b& -b\\
 a&-b&-b& -b&1-b
\end{array} \right|=a(b^3-b^2-2b+1)
.\]
\[
\det A_2= \left| \begin{array}{lllll}
1 & a& 0&0&-b\\
0 & a&0& -b&-b\\
0 & a&1-b&-b& -b\\
0 & a&-b&1-b& -b\\
-b&a&-b& -b&1-b
\end{array} \right|=a(-b+1)
.\]
\[
\det A_3= \left| \begin{array}{lllll}
 1 & 0& a&0&-b\\
 0 & 1&a& -b&-b\\
0 & 0&a&-b& -b\\
0 & -b&a&1-b& -b\\
 -b&-b&a& -b&1-b
\end{array} \right|=a
.\]
\[
\det A_4= \left| \begin{array}{lllll}
 1 & 0& 0&a&-b\\
 0 & 1&0& a&-b\\
0 & 0&1-b&a& -b\\
0 & -b&-b&a& -b\\
 -b&-b&-b& a&1-b
\end{array} \right|=a(-b^2+b+1)
.\]
\[
\det A_5= \left| \begin{array}{lllll}
 1 & 0& 0&0&a\\
 0 & 1&0& -b&a\\
0 & 0&1-b&-b& a\\
0 & -b&-b&1-b& a\\
 -b&-b&-b& -b&a
\end{array} \right|=a(b^4-b^3-3b^2+2b+1)
.\]

  By guessing the formula and using induction over $k$ even we get the result.
\end{proof}
Using (\ref{eq2}), we derive that the generating function $GC_k(x,q)$ is given by
\begin{align*}
GC_k(x,q)=1+\frac{1}{\det A}\sum_{\ell=1}^{k}{\det A_\ell},
\end{align*}
for $k$ odd.
By repeating the same steps as a Theorem \ref{Th3} and applying lemma \ref{L3} and lemma \ref{L4} we obtain the desired result.

\end{proof}

\begin{corollary}
The number of the gk-connectors blocks over all words of length $n$ over the alphabet $[k]$ 
 where $k$ is odd number,is given by
$$\lfloor \frac{k+1}{2} \rfloor k^{n-1} (n-1).$$
\end{corollary}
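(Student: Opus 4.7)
The plan is to mirror the proof of the even-$k$ corollary, using Theorem \ref{Th4} in place of Theorem \ref{Th3}. Writing the generating function as $GC_k(x,q) = f(x,q)/(f(x,q)-g(x,q))$ with $b = x(q-1)$, where $f$ is the common numerator and $f-g$ is the denominator in Theorem \ref{Th4}, I would differentiate in $q$, evaluate at $q=1$, and read off the coefficient of $x^n$.

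Since $b$ vanishes at $q=1$, evaluation collapses both sums to their $i=0$ contributions, giving $f(x,1)=1$, $g(x,1)=kx$, and $f(x,1)-g(x,1)=1-kx$. Only the $i=1$ terms survive in the $q$-derivatives at $q=1$: a direct computation yields $\partial_q f|_{q=1} = -\lfloor(k+1)/2\rfloor\, x$ (where the oddness of $k$ enters via $\lfloor(k-1)/2\rfloor + 1 = \lfloor(k+1)/2\rfloor$), and after simplifying the sign exponents via $\lfloor(j+1)/2\rfloor + \lfloor j/2 \rfloor = j$ and $\lfloor(j-1)/2\rfloor + 1 = \lfloor(j+1)/2\rfloor$, one obtains
\[
\partial_q g|_{q=1} = x^2\sum_{j=1}^{k-1}(-1)^{j}\lfloor(j+1)/2\rfloor.
\]

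The step I expect to be the main obstacle is evaluating this alternating sum, since it differs essentially from the value $\lfloor k/2 \rfloor$ that appears in the even case. Here, because $k$ is odd, $k-1$ is even, so pairing the terms $j = 2m-1,\,2m$ gives $-m+m=0$ and the whole sum vanishes. Substituting into the quotient rule then collapses the derivative to $\lfloor(k+1)/2\rfloor\, k\, x^2/(1-kx)^2$, and expanding $x^2/(1-kx)^2 = \sum_{n\ge 2}(n-1)k^{n-2}x^n$ reads off the coefficient of $x^n$ as $\lfloor(k+1)/2\rfloor\,(n-1)\,k^{n-1}$, as claimed. Apart from that cancellation, the remaining steps are routine quotient-rule bookkeeping and a geometric series expansion.
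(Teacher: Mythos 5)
Your proposal is correct and follows essentially the same route as the paper's own proof: write $GC_k=f/(f-g)$ from Theorem \ref{Th4}, differentiate at $q=1$ where only the $i=0,1$ terms survive, use $\lfloor\frac{j+1}{2}\rfloor+\lfloor\frac{j}{2}\rfloor=j$ to reduce $\partial_q g|_{q=1}$ to $x^2\sum_{j=1}^{k-1}(-1)^j\lfloor\frac{j+1}{2}\rfloor=0$, and extract coefficients from $\lfloor\frac{k+1}{2}\rfloor k\,x^2/(1-kx)^2$. Your pairing argument for why that alternating sum vanishes when $k$ is odd is a welcome justification of an identity the paper merely asserts.
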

\begin{proof}
To find the total number of the gk-connectors blocks over all words of length $n$ over the alphabet $[k]$, we differentiate the generating 
function $GC_k(x,q)$ according to $q$, substitute $q=1$, and extract the coefficients of $x^n$.
Let
 $$f(x,q)=\sum_{i=0}^{k}{(-1)^{\lfloor\frac{i+1}{2}\rfloor}}\binom{\lfloor\frac{k-i}{2}\rfloor+i}{i}x^i(q-1)^i,$$ and $$g(x,q)=f(x,q)-x\sum_{j=0}^{k-1}\left(1+{(-1)^{\lfloor\frac{j+1}{2}\rfloor}}\sum_{i=1}^{j}(-1)^{\lfloor\frac{j-i+1}{2}\rfloor}
\binom{\lfloor\frac{j-i}{2}\rfloor+i}{i}x^i(q-1)^i\right).$$
Note that, according to Theorem \ref{Th4}, we have
 $$GC_k(x,q)=\frac{f(x,q)}{f(x,q)-g(x,q)}.$$ 
Differentiating  $GC_k(x,q)$ according to $q$ and substituting $q=1$, we obtain,
\begin{equation}\label{eq 1}
\frac{\partial GC_k(x,q)}{\partial q}|_{q=1}=\frac{\frac{{\partial f(x,q)}}{{\partial q}}|_{q=1}(f(x,1)-g(x,1))-\frac{\partial( {f(x,q)-g(x,q)})}{\partial q}|_{q=1}f(x,1)}{(f(x,1)-g(x,1))^2}.
\end{equation}
Where,
$f(x,1)-g(x,1)=1-kx$, $\frac{{\partial f(x,q)}}{{\partial q}}|_{q=1}=-\lfloor \frac{k+1}{2}\rfloor x$, and 
$$\frac{\partial {g(x,q)}}{\partial q}|_{q=1}=x^2\sum_{j=1}^{k-1}(-1)^{\lfloor \frac{j+1}{2} \rfloor + \lfloor \frac{j}{2} \rfloor}(\lfloor \frac{j-1}{2} \rfloor +1).$$
Using the facts
$ \lfloor \frac{j+1}{2} \rfloor + \lfloor \frac{j}{2} \rfloor=j$
 and $\lfloor \frac{j-1}{2} \rfloor +1=\lfloor \frac{j+1}{2} \rfloor$
leads to $$\frac{\partial {g(x,q)}}{\partial q}|_{q=1}=x^2\sum_{j=1}^{k-1}(-1)^{j}\lfloor \frac{j+1}{2} \rfloor. $$
Using the identity $\sum_{j=1}^{k-1}(-1)^{j}\lfloor \frac{j+1}{2} \rfloor=0$ 
in (\ref{eq 1}), leads to
\begin{align*}
\frac{\partial GC_k(x,q)}{\partial q}|_{q=1}&= \left( \lfloor \frac{k+1}{2} \rfloor k \right)\frac{x^2}{(1-kx)^2}\\
&=\left( \lfloor \frac{k+1}{2} \rfloor k \right)x^2\sum_{i=0}^{\infty}\binom{1+n}{n}(kx)^n.
\end{align*}
By finding the coefficients of $x^{n-2}$ we get the desired result.
\end{proof}

\end{document}